\numberwithin{equation}{section}
\newtheorem{lemma}{Lemma}[section]
\newtheorem{theorem}[lemma]{Theorem}
\newtheorem{corollary}[lemma]{Corollary}
\newtheorem{proposition}[lemma]{Proposition}
\theoremstyle{definition}
\newtheorem{example}[lemma]{Example}
\newtheorem{remark}[lemma]{Remark}
\newtheorem{definition}[lemma]{Definition}
\long\def\symbolfootnote[#1]#2{\begingroup%
\def\thefootnote{\fnsymbol{footnote}}\footnote[#1]{#2}\endgroup}
\begin{document}
\newcommand{\ul}{\underline}
\newcommand{\be}{\begin{equation}}
\newcommand{\ee}{\end{equation}}
\newcommand{\ben}{\begin{enumerate}}
\newcommand{\een}{\end{enumerate}}

\newcommand{\con}{\mathbin\Vert}
\newcommand{\R}{\mathbb{R}}
\newcommand{\N}{\mathbb{N}}
\newcommand{\Z}{\mathbb{Z}}
\newcommand{\Id}{\mathbb{I}_d}
\newcommand{\J}{\mathcal{J}}
\newcommand{\mL}{\mathcal{L}}
\newcommand{\A}{\tilde{A}}
\newcommand{\al}{\alpha}
\newcommand{\bb}{\beta}

%\long\def\symbolfootnote[#1]#2{\begingroup%
%\def\thefootnote{\fnsymbol{footnote}}\footnote[#1]{#2}\endgroup}
%\textwidth=30cc
%\baselineskip = 16pt
\noindent

\title{Multifactorisations and Divisor Functions}
%\date{Revised February 17, 2011}
\date{\today}

\author{A. D. Law,\quad M. C. Lettington\quad and\quad K. M. Schmidt \vspace{5mm} \\ Cardiff University, School of Mathematics, UK}

\maketitle
\begin{abstract}
We consider a joint ordered multifactorisation for a given positive integer $n\geq 2$ into $m$ parts, where $n=n_1~\times~\ldots~\times~n_m$, and each part $n_j$ is split into one or more component factors. Our central result gives an enumeration formula for all such joint ordered multifactorisations $\mathcal{N}_m(n)$. As an illustrative application, we show how each such factorisation can be used to uniquely construct and so count the number of distinct additive set systems
(historically referred to as complementing set systems). These set systems under set addition generate the first $n$ non-negative consecutive integers uniquely and, when each component set is centred about 0, exhibit algebraic invariances. For fixed integers $n$ and $m$, invariance properties for $\mathcal{N}_m(n)$ are established. The formula for $\mathcal{N}_m(n)$ is comprised of sums over associated divisor functions and the Stirling numbers of the second kind, and we conclude by deducing sum over divisor relations for our counting function $\mathcal{N}_m(n)$.
\end{abstract}

\section{Introduction}
\symbolfootnote[0]{
2010 \emph{Mathematics Subject Classification}: 11B13, 11E25, 11B25, 11B30, 11A51.\newline
\emph{Key words and phrases}: multifactorisations, joint ordered factorisations, divisor functions, additive systems, algebraic invariances.}
We begin by outlining two of the three main areas under consideration, multifactorisations and divisor functions, before stating our main results. These relate to the counting function $\mathcal{N}_m(n)$, which counts the number of joint ordered factorisations of the positive integer $n$ into $m$ parts. \vspace{3 mm}

\noindent
In 1893 P. A. MacMahon discussed in his \emph{Memoir on the Composition of Numbers} \cite{macmahon}
the arithmetic function (our notation)
$c(n)$, which gives the number of ordered factorisations of $n$ into factors greater than~1.

\begin{example} When $n=12$ the number of ordered factorisations $c(12)$ is given by counting
\begin{align*}
12&=12
=2\times 6 = 6\times 2=3\times 4 = 4\times 3
=2\times 2\times 3 = 2\times 3 \times 2 = 3\times 2\times 2,\\
&\Rightarrow c(12)=1+4+3=8.
\end{align*}
\end{example}
%E. C. Titchmarsh considers $c(n)$ in the early pages of his famous book \emph{The Theory of the Riemann Zeta Function} %(1951), setting $c(1)=1$,
%with Dirichlet series for $\Re s>1$
%\[
%\sum_{n=1}^\infty \frac{c(n)}{n^s}=\frac{1}{2-\zeta(s)},\quad\text{where}\quad \zeta{(s)}=\sum_{n=1}^\infty \frac{1}{n^s}.
%\]
In \cite{mcl2,mnh1} concise notation for the concept of an \emph{m-part joint ordered factorisation} was introduced, as defined below, where we use the notation
$\mathbb{N}_{\geq 2}=\mathbb{N}+\{1\}=\{2,3,4,\ldots\}$.
\begin{definition}[joint ordered factorisation] Let $m\in\mathbb{N}$, $a=(n_1,\dots,n_m)\in \mathbb{N}_{\geq 2}^m$, with $n=n_1\times\ldots \times n_m$. Then we call
\begin{equation*}
    \Big(\big(j_1,f_1\big),\big(j_2,f_2\big),\dots,\big(j_L,f_L\big)\Big)\in\big(\{1,2,\dots,m\}\times(\mathbb{N}_{\geq 2})\big)^L,
\end{equation*}
where $L\in\mathbb{N}$, an \textit{m-part joint ordered factorisation of $n$} if $j_\ell\neq j_{\ell-1}$ $\big(\ell\in\{2,\dots,L\}\big)$ and
\begin{equation*}
    \prod_{j_\ell= j}f_\ell=n_j,\quad\text{for all}\quad \big(j\in\{1,\dots,m\}\big),
\end{equation*}
where
%ere $\mathcal{L}_j:=\{\ell\mid j_\ell=j\}=\{\ell_1^{(j)},\dots,\ell_{k_j}^{(j)}\}$, with suitable $k_j\in\N$, so that
$\ell$ indexes the positions of the tuples in the joint ordered factorisation that correspond to factors of $n_j$.

Furthermore we define the partial products of all factors before the $\ell$-th
%and of the factors with index class $j$ less that $\ell$,
in the joint ordered factorisation, as
\[
F(\ell)=\prod_{s=1}^{\ell-1}f_s,\quad\text{so that}\quad n=\prod_{j=1}^m n_j=F(L+1),
%\quad P_j(\ell)=\prod_{\substack{q\in \mathcal{L}_j \\ q<\ell}}f_q,
\]
where for $\ell=1$ we use the usual convention for the empty product $F(1)=1$.
%respectively, so that
%\[
%N=\prod_{j=1}^m n_j=F(L+1),\quad\text{and}\quad F(\ell)=\prod_{j=1}^mP_j(\ell).
%\]
%Furthermore, define $e_j:=\max\{\kappa\in\{1,\dots,k_j\}\mid f_{l_\kappa^{(j)}}\text{ even}\}$ with the $j$-th index %subset $\mathcal{L}_j':=\{\ell\mid j_\ell=j,\ell>\ell_{e_j}\}$. If $n_j$ is odd, then $e_j:=0$, %$\mathcal{L}'_j=\mathcal{L}_j$ and we say $\ell_{e_j}=f_{\ell_{e_j}}=F(\ell_{e_j})=P_j(\ell_{e_j})=0$.
\end{definition}
In other words, a joint ordered factorisation of an $n$-tuple of natural numbers
$a=(n_1, \dots, n_m)$ arises from writing each of these numbers as a product of non-trivial factors,
i.e., factors\ $\ge 2$, and then arranging all factors in a linear chain such
that no two adjacent factors arise from the factorisation of the same number.
Denoting by $\Omega(n)$ the total number of prime factors of $n$ including repeats, it follows from the above definition that the maximum length $L$ (number of tuples) in the joint ordered factorisations for $n$ into $m$ parts
satisfies $L\leq \Omega(n)$.

In \cite{rOB} Ollerenshaw and Br\'ee considered the two-part joint ordered factorisations as divisor paths.

\begin{example} When $n=12$, and the number of parts in the factorisation is $m=2$, we have $n=12=n_1\times n_2$. Working through each possibility, we find 14 2-part joint ordered factorisations, 7 of which are given below with the partial product progressions for the $a=(n_1,n_2)$ tuples.
\begin{align*}
&\text{\bf Ordered Factorisation} & \qquad \qquad {\bf a=(n_1,n_2)} &\qquad\qquad &\quad\text{\bf Partial Products $F$}&\\
&\left ((1,2)(2,6)\right )& (2,6)\quad &\qquad\qquad& 1,2,12\qquad\qquad\quad\,\,\,&\\
&\left ((1,6)(2,2)\right )& (6,2)\quad  &\qquad\qquad& 1,6,12\qquad\qquad\quad\,\,\,&\\
&\left ((1,3)(2,4)\right ) & (3,4)\quad  &\qquad\qquad& 1,3,12\qquad\qquad\quad\,\,\,&\\
&\left ((1,4)(2,3)\right ) & (4,3)\quad &\qquad\qquad& 1,4,12\qquad\qquad\quad\,\,\,&\\
&\left ((1,2)(2,3)(1,2)\right ) & (4,3)\quad  &\qquad\qquad& 1,2,6,12\qquad\qquad\,\,\,&\\
&\left ((1,3)(2,2)(1,2)\right )& (6,2)\quad  &\qquad\qquad& 1,3,6,12\qquad\qquad\,\,\,&\\
&\left ((1,2)(2,2)(1,3)\right )& (6,2)\quad  &\qquad\qquad& 1,2,4,12\qquad\qquad\,\,\,&\\
\end{align*}
The remaining 7 possibilities are then obtained by starting the joint ordered factorisations from $n_2$ rather than $n_1$, so that ((1,2)(2,2)(1,3)) becomes ((2,2)(1,2)(2,3)) etc.

Repeating the above constructions when there are $m=1$ and $m=3$ parts, we find that there are respectively $1\times 1!=1$
and $3\times 3!= 18$ distinct joint ordered factorisations, so in total $1+14+18=33$ factorisations for $12$. As $\Omega(12)=3$, no factorisations exist for $m\geq 4$.
\end{example}
MacMahon considered the prime factor exponents of the positive integer 
$n = p_1^{a_1} p_2^{a_2} \cdots p_\nu^{a_\nu}\geq 2$,
where $p_1, p_2, \dots, p_\nu$ are distinct prime numbers. He obtained the formula for $c(n)$,
%Let
%$j \in {\mathbb N}.$ Then for $n = p_1^{a_1} p_2^{a_2} \cdots p_\nu^{a_\nu}$ with distinct primes
%$p_1, p_2, \dots, p_\nu,$ and setting $\Omega(n)=a_1+a_2+\ldots +a_\nu$, the number of prime factors $n$ including %repeats, we have
\be\label{eq:cn}
c(n)=\sum_{j=1}^{\Omega(n)}c_j(n) =\sum_{j=1}^{\Omega(n)} \sum_{k=0}^j (-1)^k \binom{j}{k} \prod_{\ell=1}^\nu \binom{a_\ell +  j - k - 1}{a_\ell}
\ee
%which can be written as (see Lemmas 1 and 4 of \cite{mcl2})
%\be\label{eq:cjn}
%c(n) =\sum_{j=1}^{\Omega(n)}c_j(n),\quad\text{where for $j\geq 1$},\quad c_j(n)= \sum_{k=0}^j (-1)^k \binom{j}{k} %\prod_{\ell=1}^\nu \binom{a_\ell +  j - k - 1}{a_\ell}.
%\ee
(see \cite[\S10, p.843]{macmahon}, \cite[eq.(4)]{survey}),
where $c_j(n)$ is the number of ways of writing $n$ as an ordered product of $j$ non-trivial factors.
% so for $j\geq 1$,
%$c_j(n)$ counts the number of ways of writing $n=n_1\times \ldots \times n_j$, where different orderings are counted.
We call $c_j(n)$ the $j$-th non-trivial divisor function.
Eq.\ (\ref{eq:cn}) was rediscovered (see Lemmas 1 and 4 of \cite{mcl2}) through the theory of $j$-th divisor functions \cite{mcl4}, obtaining in the process the associated divisor function $ c_j^{(r)}(n)$, defined for
$j \in {\mathbb N}_0,$
$r \in {\mathbb Z},$
such that
\be\label{eq:cjrn}
 c_j^{(r)}(n) = \sum_{k=0}^j (-1)^k \binom{j}{k} \prod_{\ell=1}^\nu \binom{a_\ell + r + j - k - 1}{a_\ell},
\ee
where we take $c_j^{(0)}=c_j$ with $c_0(n)=1$ if $n=1$ and $c_0(n)=0$ if $n\geq 2$. The difference between the two formulae
in (\ref{eq:cjrn}) and the inner sum in (\ref{eq:cn}) is subtle, with $j$ replaced with $j+r$ in the inner binomial coefficient in (\ref{eq:cn}), whilst the outer binomial coefficient remains unchanged.

Results in \cite{mcl4} show that
$|c_j^{(r)}(n)|$ counts the number of factorisations of $n$ into $j+r$ factors, the first $j$ being non-trivial (i.e. factors $\geq 2$)
if $r \ge 0$, and into $-r$ factors, of which the first
$j$ factors are non-trivial and all factors must be square-free if $r< 0$ and $j<|r|$.
If $r< 0$ and $j>|r|$, then cancellations occur in the formula for $c_j^{(r)}(n)$ and a combinatorial interpretation cannot be inferred.
%into $\max\{j, -r\}$ factors if $r < 0,$ of which at least
%$j$
%must be non-trivial, so again we find $c_j^{(r)}(n) = 0$ if $j > \Omega(n).$
%(Also, if $r < 0$, then at least $-r$ factors must be square-free.)
%\par
The special case
$j = -r$
%\begin{align}
% c_j^{(-j)}(n) &= (-1)^j \sum_{n_1 n_2 \cdots n_j = n} (\mu-e)(n_1)\, (\mu-e)(n_2) \cdots (\mu-e)(n_j) \qquad (n \in %{\mathbb N}),
%\label{especial}\end{align}
turns out to be of particular importance (cf. \cite{mcl4}\ Theorem 4), as
the value of
$c_j^{(-j)}(n)$
can be interpreted as
$(-1)^{\Omega(n) + j}$
times the number of ordered factorisations of
$n$
into $j$ non-trivial, square-free factors. For more information about these functions, see \S 2 below.

For a fixed $m$-tuple $a=(n_1,\ldots,n_m)$, with $n=n_1\times \ldots \times n_m$, using the associated divisor function $c_j^{(-j)}(n)$, it was proven (\cite{mcl4} Theorem 4) that the total number of different $m$-part joint ordered factorisations for $n$ over the $m$-tuple $a$ is given by
\be\label{eq:tuplecount}
M_a(n)=\sum_{\ell \in {\mathbb N}^m} \binom{|\ell|}{ \ell} \prod_{j=1}^m c_{\ell_j}^{(-\ell_j)}(n_j),
\ee
where in the usual notation $\binom{|\ell|}{ \ell}$ denotes the multinomial coefficient for $\ell=(\ell_1,\ldots,\ell_m)$, with
$|\ell|=\ell_1+\ldots +\ell_m$.
% so that
%\[
%\binom{|\ell|}{ \ell}=\frac{(\ell_1+\ldots +\ell_m)!}{\ell_1!\times \ell_2!\times\ldots\times\ell_m!}.
%\]
Although concisely notated, the above counting formula is in practice lengthy to use for large values of $m$ and $\Omega(n)$, as it requires that the sum and product runs over all $m$-tuples $\ell$ with $1\leq \ell_j\leq \Omega(n_j)$.

However, the formula (\ref{eq:tuplecount}) partially motivates this paper, being central to deriving our main result for
\[
\mathcal{N}_m(n)=
\sum_{\substack{a \in {\mathbb N}_{\geq 2}^m\\ \prod n_j =n}}M_a(n),
\]
which counts the number of all $m$-part joint ordered factorisations of $n$. It employs
the modified M\"obius function, defined below.

\begin{definition}[modified M\"obius function]\label{defn4}
Denote by  $\mu(n)$ the M\"obius function, returning $(-1)^{\Omega(n)}$ if $n$ is square free, $0$ otherwise, and by $e(n)$ the convolution identity, returning $1$ if $n=1$ and $0$ if $n\geq 2$. Then the \emph{modified M\"obius function} $(\mu-e)(n)$ is defined by
\[
 (\mu - e)(n) = \begin{cases} (-1)^{\Omega(n)} & \text{\rm if $n$ is square-free} \\
  0 &  \text{\rm otherwise (including the case $n = 1$)}
  \end{cases}
 \qquad (n \in {\mathbb N}).
\]
\end{definition}
\begin{theorem}[factorisation enumeration theorem]\label{enum22}
Denote by $S(L,m)$ the Stirling numbers of the second kind, by $(\mu-e)^{*L}$ the $L$-th convolution of the modified
M\"obius function, and by $c_L^{(-L)}(n)$ the special case of the associated divisor function.

Then for natural numbers
$m,n \in \mathbb{N}$,
the number of $m$-part joint ordered factorisations of $n$
is equal to
\[
 {\cal N}_m(n) = \sum_{L=0}^{\Omega{(n)}} (-1)^L m!\, S(L,m)\, (\mu-e)^{*L}(n)
 = \sum_{L=0}^{\Omega{(n)}} m!\, S(L,m)\, c_L^{(-L)}(n).
\]
\end{theorem}
Here the well known Stirling numbers of the second kind $S(L,m)$ \cite{riordan} count the number of ways to partition a set of $L$ objects into $m$ nonempty subsets.

We will prove Theorem \ref{enum22} in \S 3. For $m=2$ we also have the next result. 

\begin{theorem}\label{thm55}
For $n\in\mathbb{N}$
\begin{equation}\label{sdf12}
\mathcal{N}_2(n)=2\sum_{L=2}^{\Omega{(n)}} c_L(n).
\end{equation}

%so that
%\begin{equation}\label{sdf3}
%{\cal M}_2(n)=\mathop{\sum_{d\mid n}}_{d<n}\left ( {\cal M}_2(d)+{\cal M}_{1}(d) \right )
%=c_2(n)+\mathop{\sum_{d\mid n}}_{d<n} {\cal M}_2(d).
%\end{equation}
%For $\Re s>1$ the counting function ${\cal N}_2(N)$ has Dirichlet series
%\begin{equation}\label{sdf3}
%\sum_{n=1}^\infty \frac{{\cal N}_2(n)}{n^s}=2\sum_{j=2}^\infty (\zeta(s)-1)^j
%=2\sum_{j=2}^\infty  \sum_{k=0}^j(-1)^{j+k}\binom{j}{k}\zeta(s)^k,
%\end{equation}
%which for real number $\alpha\approx 1.72865$, so that $\zeta(\alpha)=2$, can be written for $\Re s>\alpha$ as
%\begin{equation}\label{sdf33}
%\frac{1}{2}\sum_{n=1}^\infty \frac{{\cal N}_2(n)}{n^s}=\frac{1}{2-\zeta(s)}-\zeta{(s)}=\frac{(\zeta(s)-1)^2}{2-\zeta(s)}.
%\end{equation}
\end{theorem}
\begin{proof}
For $m=2$, any joint ordered factorisation of $n$ arises from taking $f_1, \dots, f_L$ to be the factors in an ordered factorisation of $n$ into at least 2 non-trivial factors and $j_1, \dots, j_L$ to alternate between 1 and 2, starting either with $j_1=1$ or with $j_1=2$.
Therefore the total number of joint ordered factorisations is twice the number of ordered factorisations into $L$ non-trivial factors, where $L$ ranges between 2 and $\Omega(n)$, and hence the result (\ref{sdf12}).
\end{proof}
The counting function $\mathcal{N}_m(n)$ satisfies the following sum over divisors relations.

\begin{theorem}[sum over divisors theorem]\label{sdf}
Let $m\in \mathbb{N}$, and $n\in \mathbb{N}_{\geq 2}$ with $\mu(n)$ the M\"obius function, ${\cal N}_m(n)$ the counting function introduced in Theorem \ref{enum22} for the number of $m$-part joint ordered factorisations of $n$. Then
${\cal N}_m(n)$ obeys the sum over divisors relations
\begin{align}\label{sdf1}
{\cal N}_m(n)&=\mathop{\sum_{d\mid n}}_{d<n}\left ((m-1) {\cal N}_m(d)+m{\cal N}_{m-1}(d) \right )\\
\label{sdf2}
&= -m \mathop{\sum_{d\mid n}}_{d<n}\mu\left (\frac{n}{d}\right)\left ({\cal N}_m(d)+{\cal N}_{m-1}(d) \right ).
\end{align}
For $m=2$, equation (\ref{sdf1}) can be written as
\begin{equation}\label{sdf3}
{\cal N}_2(n)
%=\mathop{\sum_{d\mid n}}_{d<n}\left ( {\cal N}_2(d)+2{\cal N}_{1}(d) \right )
=2d_2(n)-4+\mathop{\sum_{d\mid n}}_{d<n} {\cal N}_2(d)
=2c_2(n)+\mathop{\sum_{d\mid n}}_{d<n} {\cal N}_2(d).
\end{equation}
\end{theorem}

%\begin{remark}
%For ${\cal N}_m(n)$ we will get the sum $m!\sum_{j=m}^\infty a_j(n)c_j(n)$, where the $a_j$ count the number of ways of %reordering the factors in the joint ordered factorisations, whilst maintaining non-consecutive factors from the same part.
%\end{remark}
\begin{example}\label{exam4}
When $n=12$, we find that ${\cal N}_2(12)=14$, whereas $2d_2(12)-4=8$, and
\[
\mathop{\sum_{d\mid 12}}_{d<12}{\cal N}_2(12)=0+0+0+2+4=6,
\]
so that as given by the formula in (\ref{sdf3}), $14=8+6$.

%For $N=p$ a prime number, we have that $2d_2(p)-4=4-4=0={\cal N}_2(p)={\cal N}_2(1)$, and both sides equate to zero.
\end{example}

%With the topics of divisor functions and joint ordered factorisations introduced, and the main results for our
%enumeration function $\mathcal{N}_m(n)$ stated, to give a structural overview, we next
The remainder of the paper is structured as follows. In \S 2 we prove Theorems \ref{enum22} and \ref{sdf}.
As an application
of our enumeration function $\mathcal{N}_m(n)$, we consider in \S 3 the related topic of sum systems, deducing using a bijection that $\mathcal{N}_m(n)$ also counts the number of $m$-part sum systems for $n$. A sum system under set addition generates the first $n$ non-negative integers uniquely (without repeats) and our new results here give algebraic invariance properties for fixed values of $n$ and $m$ whilst varying the joint ordered factorisation constructions.
%count the number of  an alternative statement in terms of divisor functions for (in our notation) $\mathcal{N}_2(n)$ the %two-dimensional considerations of Long \cite{long}, the correct form of which was provided by MacMahon in \cite{macmahon}
%(see p5 of \cite{survey}).
In \S 4 we summarise our findings and consider future areas for exploration.

\section{Divisor functions, Dirichlet convolutions and proofs}

To prove Theorem \ref{sdf}, we first need to understand the associated divisor function in terms of Dirichlet convolutions.
Our starting point is the commutative Dirichlet convolution algebra of arithmetic functions, where the convolution of arithmetic functions
$f_1, f_2, \dots, f_j$
is given by
\begin{align}
 (f_1 * f_2 * \cdots * f_j)(n) &= \sum_{n_1 n_2 \cdots n_j = n} f_1(n_1) f_2(n_2) \cdots f_j(n_j),
\label{econvo}\end{align}
summing over all ordered factorisations of
$n \in {\mathbb N}$
into
$j$
factors.
We denote the $j$th convolution power as follows,
$f^{*j} := f * f * \cdots * f,$
where the right-hand side has
$j$
repetitions of
$f;$
by the usual convention,
$f^{*0} = e.$
The function
$e(n) = \delta_{n, 1}$
$(n \in {\mathbb N})$
is the neutral element of the Dirichlet convolution product, and the
convolution inverse of the constant function 1 is the well-known M\"obius
function~$\mu$, both being introduced in Definition \ref{defn4}.
%In order to state our results we first need to introduce some arithmetic divisor functions.
\par
The $j$th convolution of the constant function 1 is more generally known as the classical $j$th divisor function
$d_j = 1^{*j}$
(cf.\ \cite[p.~9]{rSchSp}),
which counts the ordered factorisations of its argument into
$j$
positive integer factors.
It can be shown (see \S 2 of \cite{mcl2}) that $d_j$ satisfies the sum-over-divisors recurrence relation
\begin{equation}\label{sodd}
d_{j+1}(n) = (d_j * 1)(n) = \sum_{m|n}d_j(m)= 1^{*j+1}(n),\qquad (n,j \in\mathbb{N})
\end{equation}
and has the Dirichlet series \cite{ect}
\[ \sum_{n=1}^\infty \frac{d_j(n)}{n^s} = \zeta(s)^j,\quad\text{where}\quad
\zeta(s)=\sum_{n=1}^\infty \frac{1}{n^s},\quad \Re s>1.
 \]
%These statements extend to the case $j = 0$ if we set $d_0(n) = e(n)$, with $e$ defined as above.
In contrast, the $j$th non-trivial divisor function $c_j$ only counts ordered factorisations in which all
factors are greater than 1. It can be
expressed as the $j$-fold Dirichlet convolution
$c_j = (1 - e)^{*j}$,
and so satisfies the slightly different sum-over-divisors recurrence relation
%compared to (\ref{sodd}),
\begin{align*}
 c_{j+1}(n) &= (c_j * (1 - e))(n) = \sum_{m | n} c_j(m) \,(1-e)\left(\frac n m\right)
 = \sum_{\substack{m|n\\ m<n}} c_j(m), %= \mathop{\sum_{m|n}}_{m\notin\{1,n\}} c_j(m)
\qquad (n, j \in \mathbb{N}).
%\label{sodc}
\end{align*}
As the Dirichlet series for $1 - e$ is $\zeta(s) - 1$, the non-trivial divisor
function $c_j$ has the Dirichlet series
	\[\sum_{n=1}^{\infty} \frac{c_j(n)}{n^s} = \left(\zeta(s)-1\right)^j. \]
These formulae
extend to $j = 0$ when we set $c_0 = e\ = d_0$.
It is important to note that $c_j$, unlike $d_j$, is \emph{not} a multiplicative arithmetic function.

Combining these two functions yields
the
{\it associated $(j, r)$-divisor function\/},
defined for non-negative integers
$r,j$ as
$c_j^{(r)} = (1-e)^{*j} * 1^{*r}.$
It was demonstrated in \cite{mcl4} that
this arithmetic function counts the ordered factorisations of its argument into
$j+r$
factors, of which the first
$j$
must be non-trivial.
\par
Moreover, as the constant function 1 has a convolution inverse, this definition extends
naturally to negative upper indices, giving the associated $(j, -r)$-divisor
function
$c_j^{(-r)} = (1-e)^{*j} * \mu^{*r}.$
We note that
$(1-e)$
does not have a convolution inverse, as $(1-e)(1)=0$, so there is no analogous extension to
negative lower indices.

Popovici \cite{rPopo} studied the functions
$c_0^{(-r)} = \mu^{*r}$.
In the associated $(j, -r)$-divisor functions, the
modified M\"obius function of Definition \ref{defn4}
%\[
 %(\mu - e)(n) = \begin{cases} (-1)^{\Omega(n)} & \text{\rm if $n$ is square-free} \\
  %0 &  \text{\rm otherwise (including the case $n = 1$)}
  %\end{cases}
 %\qquad (n \in {\mathbb N}),
%\]
appears naturally. It was also shown in \cite{mcl4} that if
$j \geq r,$ then the Dirichlet convolutions have the forms
\begin{align}
 c_j^{(-r)} &= (1-e)^{*j-r} * ((1-e) * \mu)^{*r} = (-1)^r (1-e)^{*j-r} * (\mu-e)^{*r};
\nonumber\end{align}
if
$j < r,$
then
\begin{align}
 c_j^{(-r)} &= ((1-e) * \mu)^{*j} * \mu^{*r-j} = (-1)^j (\mu-e)^{*j} * \mu^{*r-j},
\nonumber\end{align}
%so that
%$c_j^{(r)}(n)$
%involves factorisation of
%$n$
%into
%$j+r$
%factors if
%$r \geq 0,$
%into
%$\max\{j, -r\}$
%factors if
%$r < 0,$
%of which at least
%$j$
%must be non-trivial. Also for $r < 0$, at least $-r$ factors must be square-free, and for $r\geq 0$ it follows that
%$c_j^{(r)}(n) = 0$
%if
%$j > \Omega(n).$
%\par
with the special case
$j = -r,$
\begin{align}\nonumber
 c_j^{(-j)}(n) &= (-1)^j \sum_{n_1 n_2 \cdots n_j = n} (\mu-e)(n_1)\, (\mu-e)(n_2) \cdots (\mu-e)(n_j)\\
 &=\sum_{n_1 n_2 \cdots n_j = n} (e-\mu)(n_1)\, (e-\mu)(n_2) \cdots (e-\mu)(n_j)\nonumber \\
 &= (e-\mu)^{{*j}}(n) \qquad (n \in {\mathbb N}),
\label{especial}\end{align}
having the interpretation as
$(-1)^{\Omega(n) + j}$
times the number of ordered factorisations of
$n$
into
$j$
non-trivial, square-free factors.

With the Dirichlet convolution notation established for the associated divisor function $c_j^{(r)}(n)$,
we next require a lemma.
\begin{lemma}\label{revsum}
Let
$(a_j)_{j \in \mathbb{N}_0}$
and
$(b_j)_{j \in \mathbb{N}_0}$
be number sequences. If
\[
 a_j = \sum_{i=0}^j \binom{j}{i} b_i \qquad (j \in \mathbb{N}_0),
\]
then
\[
 b_j = \sum_{i=0}^j (-1)^{j-i} \binom{j}{i} a_i \qquad (j \in \mathbb{N}_0),
\]
and vice versa.
\end{lemma}
\begin{proof}
For any
$ j \in \mathbb{N}_0$, we have
\begin{align*}
 \sum_{i=0}^j (-1)^{j-i} \binom{j}{i} \sum_{\ell=0}^i \binom{i}{\ell} b_\ell
= &\sum_{\ell=0}^j \left(\sum_{i=\ell}^j (-1)^{j-i} \binom{j}{i} \binom{i}{\ell} \right) b_\ell\\
=& \sum_{\ell=0}^j \left(\sum_{k=0}^{j-\ell} (-1)^k \binom{j}{j-k} \binom{j-k}{\ell}\right) b_\ell
\end{align*}
after the change of variables
$k = j - i$.
The claimed formula now follows by observing that
\begin{align*}
 \sum_{k=0}^{j-\ell} (-1)^k \binom{j}{j-k} \binom{j-\ell}{\ell}
& = \sum_{k=0}^{j-\ell} (-1)^k \frac{j!\,(j-k)!}{k!\,(j-k)!\,l!\,(j-k-\ell)!}\\
= \binom{j}{\ell} \sum_{k=0}^{j-\ell} (-1)^k \binom{j-\ell}{k}
 &= \binom{j}{\ell} (1-1)^{j-\ell} = \delta_{j,\ell}
 = \begin{cases}
0 &\text{if } j \neq \ell,   \\
1 &\text{if } j=\ell.   \end{cases}
\end{align*}
The converse follows by an almost identical calculation.
\end{proof}
\begin{example}
As a sample application, using the above lemma on Lemma 4 of \cite{mcl2}, which states that
\[
 c_j^{(r)} = \sum_{i=0}^r \binom{r}{i} c_{j+i} \qquad (r \in \mathbb{N}_0; j \in \mathbb{N}),
\]
we obtain the following expression of the non-trivial divisor
function in terms of associated divisor functions
\[
 c_{j+i} = \sum_{r=0}^i (-i)^{i-r} \binom{i}{r} c_j^{(r)} \qquad (j \in \mathbb{N}).
\]
\end{example}

\begin{proof}[Proof of Theorem \ref{enum22}]
We bear in mind the definition of the Dirichlet convolution given in (\ref{econvo}), and sum over all possible $m$-tuples \[
a=(n_1,\ldots,n_m)\in \mathbb{N}_{\geq 2}^m,
\quad\text{with}\quad n=n_1\times\ldots \times n_m\in \mathbb{N}, \quad\text{and}\quad \ell=(\ell_1,\ldots , \ell_m)\in \mathbb{N}^m,
\] where $|\ell|=\ell_1+\ldots +\ell_m$. Applying the formula given in (\ref{eq:tuplecount}), which counts the number of $m$-part joint ordered factorisations of $n$, for each individual
$m$-tuple $a$, we have that
\[
 {\cal N}_m(n) = \sum_{\substack{a \in \mathbb{N}_{\geq 2}^m \\ \prod n_j = n}} M_a(n)
 = \sum_{\substack{a \in \mathbb{N}_{\geq 2}^m\ \\ \prod n_j = n}}\,\, \sum_{\ell \,\,\in \mathbb{N}^m} \binom{|\ell|}{\ell} \prod_{j=1}^m (e-\mu)^{*\ell_j} (n_j)
\]
\[
= \sum_{\ell \,\,\in \mathbb{N}^m}\binom{|\ell|}{\ell} \,\, \sum_{\substack{a \in \mathbb{N}_{\geq 2}^m\ \\ \prod n_j = n}} \prod_{j=1}^m (e-\mu)^{*\ell_j} (n_j)
 = \sum_{\ell\,\, \in \mathbb{N}^m}\binom{|\ell|}{\ell} \left( \mathop{*}_{j=1}^m (e-\mu)^{*\ell_j} \right)(n)
 \]
\be\label{eqmue}
 = \sum_{\ell \,\,\in \mathbb{N}^m} \binom{|\ell|}{\ell} (e-\mu)^{*|\ell|}(n)
 = \sum_{\ell \,\,\in \mathbb{N}^m} \binom{|\ell|}{\ell}  c_{|\ell|}^{-|\ell|}(n),
\ee
where we have used equation (\ref{especial}) in the final step.
For the integer
$n$,
we then have the integer sequence
 $({\cal N}_m(n))_{m \in \mathbb{N}}$, which takes non-zero values for $1\leq m\leq \Omega(n)$.

Additionally, for ${m \in \mathbb{N}}$ we define the function $\tilde{\cal N}_m(n)$, similarly to (\ref{eqmue}), but with
the $m$-tuple $\ell$ running over $\mathbb{N}_0^m$, so that
%\[
% \tilde{\cal N}_m(n) = \sum_{\ell\,\, \in \mathbb{N}_0^m} \binom{|\ell|}{\ell} (e-\mu)^{*|\ell|}(n)
% = \sum_{L=0}^\infty\left(\sum_{\ell\,\, \in \mathbb{N}_0^m, |l| = L}\binom{L}{\ell}\,1^{\ell_1}\ldots 1^{\ell_m}\,\right) %(e-\mu)^{*L}(n)
% \]
 \begin{align}
\tilde{\mathcal{N}}_m(n) = \sum_{\ell \in \mathbb{N}_0^m} \binom{|\ell|}{\ell} c_{|\ell|}^{-|\ell|}(n) = \sum_{L=0}^{\Omega(N)}\bigg(c_{L}^{-L}(n)\sum_{\substack{\ell\in \mathbb{N}_0^m\\ |\ell| = L}}\binom{L}{\ell}1^{\ell_1}\dots 1^{\ell_m}\,\bigg) =\sum_{L=0}^{\Omega(n)} m^L c_{L}^{-L}(n),\label{eqmpl}
\end{align}
where in the last equality we have used $c_L^{-L}(n)=0$ for $L>\Omega(n)$, and also
\[
\sum_{\substack{\ell\in \mathbb{N}_0^m\\ |\ell| = L}}\binom{L}{\ell}1^{\ell_1}\dots 1^{\ell_m}\,=
\underbrace{(1+1+\ldots +1)^L}_{m\,\,\,\text{times}} = m^L.
\]
The expression for $\tilde{\mathcal{N}}_m(n)$ in (\ref{eqmpl}) can be expanded over $\ell\in\mathbb{N}_0^{m-r}$,
by counting the number of choices for $r$ entries to be 0 in the $m$-tuple for $\ell$. Thus we obtain
\begin{align*}
\tilde{\mathcal{N}}_m(n) =&\: \binom{m}{0}\sum_{\ell  \in \mathbb{N}^m} \binom{|\ell|}{\ell}c_{|\ell|}^{-|\ell|}(n)
 +  \binom{m}{1}\sum_{\ell  \in \mathbb{N}^{m-1}} \binom{|\ell|}{\ell} c_{|\ell|}^{-|\ell|}(n)\\
 +&  \binom{m}{2}\sum_{\ell \,\, \in \mathbb{N}^{m-2}} \binom{|\ell|}{\ell} c_{|\ell|}^{-|\ell|}(n)
 + \cdots +  \binom{m}{m}\sum_{\ell  \in \mathbb{N}^0}  \binom{|\ell|}{\ell}c_{|\ell|}^{-|\ell|}(n)\\
=&\: \sum_{k=0}^m \binom{m}{m-k}\mathcal{N}_k(n)= \sum_{k=0}^m \binom{m}{k}\mathcal{N}_k(n),
\end{align*}
by equation (\ref{eqmue}).

Now $0^L=0$ for $L\geq 1$ and $c_0^{-0}(n)=0$ for all $n\in\mathbb{N}_{\geq 2}$,
so we have that $\tilde{\mathcal{N}}_0(n)=0$ for all $n\in\mathbb{N}_{\geq 2}$.
Hence we need only consider the sum from $k=1$, and applying Lemma \ref{revsum} gives us
\begin{align*}
\mathcal{N}_m(n) = \sum_{k=1}^m (-1)^{m-k} \binom{m}{k} \tilde{\mathcal{N}}_k(n)
 = \sum_{L=0}^{\Omega(n)}  \left(\sum_{k=1}^m (-1)^{m-k}  k^L \binom{m}{k} \right) c_{L}^{-L}(n).
\end{align*}
Using the the identity for the Stirling numbers of the second kind (cf. \cite{comtet} Theorem 5.1.A)
\begin{align*}
S(L,m) = \sum_{k=1}^m (-1)^{m-k} k^{L-1}\frac{1}{(k-1)! (m-k)!},
\end{align*}
we can write
\begin{align*}
\mathcal{N}_m(n) =&\: \sum_{L=0}^{\Omega(n)}  \left(\sum_{k=1}^m (-1)^{m-k}  k^L \binom{m}{k} \right) c_{L}^{-L}(n)\\
=&\: \sum_{L=0}^{\Omega(n)}  \left(\sum_{k=1}^m (-1)^{m-k} k^{L}\frac{m!}{k! (m-k)!} \right) c_{L}^{-L}(n)\\
=&\: m!\sum_{L=0}^{\Omega(n)}  \left(\sum_{k=1}^m (-1)^{m-k} k^{L-1}\frac{1}{(k-1)! (m-k)!} \right) c_{L}^{-L}(n)\\
= &\sum_{L=0}^{\Omega{(n)}} m!\, S(L,m)\, (e-\mu)^{*L}(n)
 = \sum_{L=0}^{\Omega{(n)}} m!\, S(L,m)\, c_L^{(-L)}(n).
%=&\: m!\sum_{L=0}^{\Omega(n)}  S(L,m) c_{L}^{-L}(n),
\end{align*}
\end{proof}

%In the next section we derive sums over divisor relations for our counting function ${\cal N}_2(N)$.

%\section{Sums over divisors and proof of Theorem \ref{sdfl}}
%In \cite{long} the complementing set system
%\begin{equation}\label{sdflong}
%C=A+B=\{x|x=a+b,\,\,a\in A,\,\,B\in B\}
%\end{equation}
%was considered for all complementing subsets of $\langle N\rangle= \{0,1,2\ldots, N-1\}$. In Theorem 2 they state that the %number $C(N)$ of complementing subsets of $\langle N\rangle$, is given by
%\[
%C(N)=\mathop{\sum_{d\mid N}}_{d<N} C(d).
%\]
%\begin{remark}
%The paper \cite{long} would have benefitted from some examples to demonstrate how this sum over divisors relates to the %above tables given in Example \ref{ex3}. To help clarify this matter we now develop the sum over divisors relation for our %counting function ${\cal N}_m(N)$, as stated below in Theorem \ref{sdf}
%\end{remark}

\begin{proof}[Proof of Theorem \ref{sdf}]
We begin with Theorem 1(a) of \cite{mcl4}, which gives a three term recurrence relation for the associated divisor function, stated here as
\[
c_{k+1}^{(r)}=c_k^{(r+1)}-c_k^{(r)},
\]
and setting $r=-L$ and $k=L-1$, we obtain
\[
c_{L}^{(-L)}=c_{L-1}^{(-(L-1))}-c_{L-1}^{(-L)}.
%=c_{j-1}^{(-(j-1))}-\sum_{d\mid n}c_{j-1}^{(-(j-1))}.
\]
Substituting in the formula for ${\cal N}_m(n)$ given in Theorem~\ref{enum22}, where
the upper index of summation $\Omega{(n)}$ is formally replaced by $\infty$ as $c_{L}^{(-L)}(n)=0$ for $n\geq \Omega{(n)}$, we have that
\[
{\cal N}_m(n) = \sum_{L=0}^\infty m!\, S(L,m)\, c_{L}^{(-L)}(n)
=\sum_{L=0}^\infty m!\, S(L,m)\,\left ( c_{L-1}^{(-(L-1))}(n)-c_{L-1}^{(-L)}(n)\right ).
\]
We now substitute for the well known three-term recurrence relation for the Stirling numbers of the second kind \cite{riordan}
$
S(L,m)=m\,S(L-1,m)+S(L-1,m-1),
$
along with the fact that $S(0,m)=0$ for $m\geq 1$ to obtain
\[
{\cal N}_m(n)
=\sum_{L=1}^\infty m!\, \left (m\,S(L-1,m)+S(L-1,m-1)\right )\,
\left ( c_{L-1}^{(-(L-1))}(n)-c_{L-1}^{(-L))}(n)\right ).
\]
Lowering the index of summation to start at $L=0$ gives us
\[
{\cal N}_m(n)
=\sum_{L=0}^\infty m!\, \left (m\,S(L,m)+S(L,m-1)\right )\,
\left (c_{L}^{(-L)}(n)-c_{L}^{(-L-1)}(n)\right ).
\]
Expanding out the brackets and rearranging, we have four summation terms which by Theorem \ref{enum22} can be written as
% (\ref{sdf1})
\[
{\cal N}_m(n)= m\left ({\cal N}_m(n)+{\cal N}_{m-1}(n)\right )-\sum_{L=0}^\infty m!\, \left (m\,S(L,m)+S(L,m-1)\right )
c_{L}^{(-L-1)}(n).
\]
In the final step we sum each side over the divisors $d$ of $n$, noting from (\ref{sodd}) that $({c_j^{(r)}}{*}1)(n)=c_j^{(r+1)}(n)$, to obtain
\begin{align*}
\sum_{d\mid n} {\cal N}_m(d)&=\left (m{\sum_{d\mid n}}\left ( {\cal N}_m(d)+{\cal N}_{m-1}(d) \right )\right )
-\sum_{L=0}^\infty m!\, \left (m\,S(L,m)+S(L,m-1)\right )
c_{L}^{(-L)}(n)\\
&=\left ( m{\sum_{d\mid n}}\left ( {\cal N}_m(d)+{\cal N}_{m-1}(d) \right )\right )
-m\left ( {\cal N}_m(n)+{\cal N}_{m-1}(n) \right )\\
&= \mathop{m\sum_{d\mid n}}_{\,\,\,\,\,d<n}\left ( {\cal N}_m(d)+{\cal N}_{m-1}(d) \right ),
\end{align*}
where we have again used Theorem \ref{enum22}. Rearranging then gives the desired result
\[
{\cal N}_m(n)=\mathop{\sum_{d\mid n}}_{d<n}\left ((m-1) {\cal N}_m(d)+m{\cal N}_{m-1}(d) \right ).
\]
For the second identity (\ref{sdf2}), we consider the sum over all divisors of $n$, written as
\[
m\left ( {\cal N}_m(n)+{\cal N}_{m-1}(n)\right )
=\sum_{d\mid n}\left ((m-1) {\cal N}_m(d)+m{\cal N}_{m-1}(d) \right ),
\]
and then apply M\"obius inversion to obtain
\[
(m-1) {\cal N}_m(n)+m{\cal N}_{m-1}(n)=
m\sum_{d\mid n}\mu\left( \frac{n}{d}\right)\left ({\cal N}_m(d)+{\cal N}_{m-1}(d)\right ).
\]
Noting that $\mu(n/n)=\mu(1)=1$, and rearranging, we obtain the statement (\ref{sdf2})
\[
{\cal N}_m(n)=
 -m \mathop{\sum_{d\mid n}}_{d<n}\mu\left (\frac{n}{d}\right)\left ({\cal N}_m(d)+{\cal N}_{m-1}(d) \right ).
\]

To see (\ref{sdf3}), equation (\ref{sdf1}) with $m=2$ yields
\[
{\cal N}_2(n)=
\mathop{\sum_{d\mid n}}_{d<n}\left ( {\cal N}_2(d)+2{\cal N}_{1}(d) \right ).
\]
Here ${\cal N}_{1}(1)=0$ and ${\cal N}_{1}(d)=1$ for $d>1$, with $d_2(n)$ the well known divisor function, which counts the number of ways of writing $n=n_1\times n_2$, with $n_1,n_2\geq 1$. Removing the two cases $n_1=1$ and $n_2=1$, we obtain the non-trivial divisor function $c_2(n)=d_2(n)-2$.

\end{proof}

%two-dimensional considerations of Long \cite{long}, the correct form of which was provided by MacMahon in \cite{macmahon}
%(see p5 of \cite{survey}).

\begin{remark}
The sum over divisors formula (\ref{sdf3}) of Theorem \ref{thm55} for the 2-part joint ordered factorisation case ${\cal N}_2(n)$ was also stated by MacMahon \cite{macmahon} using alternative notation.  In \cite{long} C. T. Long made the connection between the number of two-dimensional additive (complementing) set systems and ${\cal N}_2(n)$ (referred to as $C(n)$ in \cite{long}). However, as stated by Knopfmacher \cite{survey}, MacMahon's sum over divisors formula was the correct version, as the formula given by Long omitted (in our notation) the term~$2c_2(n)$.

The ideas of Long can be extended to $m$ parts, where the joint ordered factorisations underpin the additive set system constructions. As an illustration for the use of joint ordered factorisations, we show in the following section how the counting functions ${\cal{N}}_m(n)$ can be used to count the number of different $m$ part sum systems for a given positive integer $n\geq 2$.

%as an illustrative application, we now show how each joint ordered factorisation of $n$ can be used to uniquely construct %and so count the number of distinct $m$ part sum systems.

\end{remark}

\noindent
\section{Sum systems}

Sum systems are the third key topic under consideration here, which we now describe before stating our results which include algebraic invariance properties for \emph{centred sum systems}.

In what follows, for $a \in {\mathbb N},$ we use the notation $\langle a \rangle= \{0, 1, \dots, a-1\}$  so the $a$-term arithmetic progression with start value $r,$ step size $s$ can be expressed as
$s \langle a \rangle + r \ = \{r, r+s,  \dots, r+(a-1)s \}.$

An $m$-dimensional additive set system for a given target set of integers, $T$, is a collection of $m$ sets of integers, $A_1,A_2,\ldots A_m$, each with cardinality $|A_j|$, for $ j\in\{1,\dots,m\}$, such that their sumset satisfies
\[
T=\sum_{j=1}^m A_j=\bigg\{ \sum_{j=1}^ma_j\mid a_j\in A_j \bigg\}.
\]
The cardinality of these sets satisfy the equation $|T|=|A_1||A_2|\ldots |A_m|$ if and only if each element $t\in T$ is represented by a unique sum in this set sum, and we then call the additive set system a \textit{basis} for $T$.

The study of additive systems dates back to de Bruijn's paper \cite{bruijn} on (possibly infinite) sets of non-negative integers $A_k$, with $|A_k|\neq 0$ and $0\in A_k$, for $T=\N_0$. He referred to such an additive system as a \emph{number system}.

Subsequently such systems became known as complementing set systems \cite{vaidya,long}, the latter paper focusing on systems that uniquely represents the first $n$ consecutive integers, $T=\{0,1,2,\ldots,n-1\}$, and enumerating the systems when $m=2$. %More recently an exhaustive construction of complementing set systems, called \textit{sum systems}, based on integer %factorisations of the cardinalities $|A_1|,\dots,|A_m|$, with general $m\in\N$, was given by \cite{mnh1}, and the number %of sum systems was enumerated in the general case \cite{mcl4}.

A question posed by de Bruijn in the closing remarks of \cite{bruijn} refers to an earlier publication of his \cite{bruijn1}, concerning the analogous problem for number systems representing uniquely all integers in $T=\Z$. Of this de Bruijn says ``That problem is much more difficult that the one dealt with above, and it is still far from a complete solution.''
It is this more general consideration of de Bruijn that also partially motivates the results in this section, building on the related concepts of \emph{sum-and-distance systems}, established in \cite{mcl2,hillthesis,mnh1,law}.

We now formally define \emph{sum systems}, which restrict the target set to be
the first $n$ non-negative integers $\langle n \rangle$, and require uniqueness of the representation of each number.

\begin{definition}\label{maindefn}
Let $m\in\mathbb{N}$, $A_j \subset {\mathbb N_0}$ $(j \in \{1, \dots, m\})$, with cardinality  $|A_j|~=~n_j$, and $\prod_{j=1}^m|A_j|=n=n_1\times n_2\times \ldots\times n_m$. Then we call $A_1, A_2, \dots, A_m$ an \textit{$m$-part sum system} if
\begin{equation}
 \sum_{j=1}^m A_j = \left\langle{n}\right\rangle.
\end{equation}
\end{definition}
\begin{lemma}
Let $m \in {\mathbb N},$ and
suppose the sets
$A_1, A_2, \dots, A_m \subset {\mathbb N}_0$
form an $m$-part sum system for the first $n$ non-negative integers.
Then the \emph{centred sum system}
$C_1, C_2, \dots, C_m$, defined such that
$$ \label{consec}C_j = A_j-\frac{(\max A_j)}{2},$$
generates under set addition
\[
 \sum_{j=1}^m C_j = \left\{-\frac{n-1}{2},-\frac{n-3}{2},\ldots\ldots,\frac{n-3}{2}, \frac{n-1}{2}\right\},
\]
i.e. either $n$ consecutive integers centred about 0
or $n$ consecutive half-integers centred about 0, depending on whether $n$ is an odd or an
even integer.
\end{lemma}
\begin{proof}
From Lemma 3.2 and Theorem 3.3 of \cite{mnh1}, every sum system component set is palindromic, satisfying $A_j= (\max A_j) - A_j$. As sum system component sets $A_j$, contain non-negative integers and always include $0$, it follows from the above definition that each centred sum system component set is centred about the origin, has cardinality $|C_j|=|A_j|$, with max$\,C_j=(\text{max}\,A_j)/2$, and so we have
\[
\sum_{j=1}^m \text{max}\,C_j=\frac{1}{2}\sum_{j=1}^m \text{max}\,A_j=\frac{n-1}{2}.
\]
Hence when $n$ is an odd integer, $(n-1)/2$ is also an integer and $C_1+\ldots +C_m$ generates the set of consecutive integers between $-(n-1)/2$ and $(n-1)/2$. Conversely, when $n$ is an even integer we find that our centred sum system
under set addition generates the consecutive half-integers in this interval.
\end{proof}
An explicit construction method for all sum systems was proven in \cite[Theorem~6.7]{mnh1}, whereby a bijection between the set of joint ordered factorisations for an $m$-tuple of natural numbers $a=(n_1, \dots, n_m)$, and the set of all sum systems for the corresponding $m$-part sum system was established, as stated below in Proposition~\ref{tssbuild}.
%that,
%given a joint ordered factorisation, the sets
%form a sum system, and that conversely any sum system arises from some joint ordered factorisation of the %cardinalities %of its component sets in this way, thus establishing a bijection between joint ordered factorisations %and sum systems.
\begin{proposition}\label{tssbuild}
Let
$m \in {\mathbb N}.$
Suppose the sets
$A_1, A_2, \dots, A_m \subset {\mathbb N}_0$
%where
% min(A_j smo) < min(A_{j+1} smo)
% (j in \{1, dots, m-1\}),
form a sum system.
Let
$n_j := |A_j|$
$(j \in \{1, \dots, m\}).$

Then there is a joint ordered factorisation
$((j_1, f_1), \dots, (j_L, f_L))$
of
$(n_1, \dots, n_m)$
such that
\begin{align}
 A_j &= \sum_{j_\ell = j} \left(\prod_{s=1}^{\ell-1} f_s \right) \langle{f_\ell}\rangle
  = \sum_{j_\ell = j} F(\ell) \langle f_\ell\rangle
 \qquad (j \in \{1, \dots, m\}).
\label{essbuild}\end{align}
Conversely, given any joint ordered factorisation of
$n$ over the $m$-tuple $a=(n_1,\ldots,n_m),$
the construction $(\ref{essbuild})$
generates uniquely an $m$-part sum system.
\end{proposition}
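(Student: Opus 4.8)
The plan is to translate the sum-system condition into a polynomial factorisation and then read the joint ordered factorisation off a greedy peeling of the smallest factor. To each finite $A\subset\N_0$ associate its generating polynomial $P_A(x)=\sum_{a\in A}x^a$. Since the unique representation of $0\in\langle N\rangle$ forces $0\in A_j$ for every $j$, and the cardinalities multiply to $N=|\langle N\rangle|$, the condition $\sum_jA_j=\langle N\rangle$ is equivalent to the identity
\[
\prod_{j=1}^m P_{A_j}(x)=1+x+\cdots+x^{N-1}=\frac{x^N-1}{x-1},
\]
where each $P_{A_j}$ is a $0/1$ polynomial with $P_{A_j}(0)=1$. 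On the factorisation side, the atomic block $F(\ell)\langle f_\ell\rangle$ has generating polynomial $\frac{x^{F(\ell+1)}-1}{x^{F(\ell)}-1}$ (using $F(\ell+1)=f_\ell F(\ell)$, $F(1)=1$), so a joint ordered factorisation produces $P_{A_j}(x)=\prod_{\ell\in\mathcal{L}_j}\frac{x^{F(\ell+1)}-1}{x^{F(\ell)}-1}$, and the product over all $\ell$ telescopes to $\frac{x^N-1}{x-1}$.

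The converse direction is then the easy one. For a given joint ordered factorisation the telescoping computation above already gives $\prod_jP_{A_j}=\frac{x^N-1}{x-1}$, i.e. $\sum_jA_j=\langle N\rangle$. For the cardinalities I would invoke uniqueness of the mixed-radix expansion: the map $(d_1,\dots,d_L)\mapsto\sum_\ell d_\ell F(\ell)$ is a bijection from $\prod_\ell\langle f_\ell\rangle$ onto $\langle N\rangle$, because the $F(\ell)=f_1\cdots f_{\ell-1}$ are exactly the place values of a positional system with bases $f_1,\dots,f_L$. Restricting the digit positions to any subset $\mathcal{L}_j$ keeps the map injective, so $|A_j|=\prod_{\ell\in\mathcal{L}_j}f_\ell=n_j$ and $\prod_j|A_j|=N$, yielding a genuine $m$-part sum system via \eqref{essbuild}.

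For the forward direction I would argue by induction on $\Omega(N)$, the number of prime factors of $N$ counted with multiplicity, peeling off one factor at each step. Exactly one set, say $A_{j_1}$, contains $1$, since the representation of $1\in\langle N\rangle$ is unique; let $f_1\ge2$ be the least positive integer not lying in $A_{j_1}$. Two elementary double-representation arguments (call this Lemma~A) give $\langle f_1\rangle\subseteq A_{j_1}$ and $A_i\cap\{1,\dots,f_1-1\}=\emptyset$ for $i\ne j_1$, since any violation would represent some $t\in\langle N\rangle$ twice; a short extension shows moreover that $f_1$ itself lies in a unique other set $A_{i_0}$ with $i_0\ne j_1$. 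The substance is the peeling lemma (Lemma~B): one shows $A_{j_1}=\langle f_1\rangle+f_1A_{j_1}'$ for a set $A_{j_1}'\ni0$, that $f_1\mid N$, and that every $A_i$ with $i\ne j_1$ is supported on multiples of $f_1$. Writing $\tilde A_{j_1}=A_{j_1}'$ and $\tilde A_i=\frac1{f_1}A_i$ $(i\ne j_1)$ then produces a sum system for $N/f_1$ with cardinality vector $(n_1,\dots,n_{j_1}/f_1,\dots,n_m)$; applying the induction hypothesis gives a joint ordered factorisation $((j_2,f_2),\dots,(j_L,f_L))$ of the reduced vector, and prepending $(j_1,f_1)$ recovers $(n_1,\dots,n_m)$ together with the original sets through \eqref{essbuild}.

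The main obstacle is Lemma~B. I would establish the block structure of $A_{j_1}$ and the divisibility of the remaining sets together by induction, the engine being the shift $f_1+r=r+f_1$ (with $r\in\langle f_1\rangle\subseteq A_{j_1}$ and $f_1\in A_{i_0}$), which propagates the ``full block, then empty block'' pattern of $A_{j_1}$ upward; once the factor $1+x+\cdots+x^{f_1-1}$ is extracted from $P_{A_{j_1}}(x)$, evaluating at a primitive $f_1$-th root of unity makes $f_1\mid N$ transparent, after which a tiling argument (the $N/f_1$ length-$f_1$ intervals $[C,C+f_1-1]$ arising from the unique representations must tile $[0,N-1]$ and hence equal the blocks $[kf_1,(k+1)f_1-1]$) forces all other sets onto $f_1\Z$ and identifies the reduced system. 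A final bookkeeping point, needed so that the output is a bona fide joint ordered factorisation, is the adjacency condition $j_\ell\ne j_{\ell-1}$: choosing $f_1$ as the initial run-length of $A_{j_1}$ guarantees $f_1\notin A_{j_1}$, hence $1\notin A_{j_1}'$, whereas $f_1\in A_{i_0}$ gives $1\in\tilde A_{i_0}$, so the set containing $1$ in the reduced system has index $j_2=i_0\ne j_1$ (or the index $j_1$ is already exhausted), exactly as the joint ordered factorisation requires.
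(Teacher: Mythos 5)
The first thing to note is that the paper does not prove Proposition~\ref{tssbuild} at all: it imports the result verbatim from Theorem~6.7 of \cite{mnh1}, so the only fair comparison is with that cited proof, which (like de Bruijn's and Long's arguments for the classical cases) proceeds by exactly the inductive ``peel off the initial block'' construction you propose. Your converse direction is complete and correct: the telescoping product $\prod_{\ell}\frac{x^{F(\ell+1)}-1}{x^{F(\ell)}-1}=\frac{x^{N}-1}{x-1}$ together with the mixed-radix bijection $(d_1,\dots,d_L)\mapsto\sum_\ell d_\ell F(\ell)$ gives both $\sum_j A_j=\langle N\rangle$ and $|A_j|=\prod_{\ell\in\mathcal{L}_j}f_\ell$, and this is if anything cleaner than a purely combinatorial verification. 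Your forward direction also has the right architecture: the uniqueness-of-representation arguments identifying $j_1$, showing $\langle f_1\rangle\subseteq A_{j_1}$, $A_i\cap\{1,\dots,f_1-1\}=\emptyset$, and locating $f_1$ in a unique $A_{i_0}$ are all correct, as is the observation that taking $f_1$ to be the initial run-length forces $1\notin A_{j_1}'$ and $1\in\tilde A_{i_0}$, which is precisely what delivers the adjacency condition $j_2\neq j_1$.

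The genuine gap is that Lemma~B, which you rightly call the substance of the theorem, is asserted rather than proven, and your one-clause sketch of it glosses over the hard step. The tiling argument you describe only becomes available \emph{after} one knows that $A_{j_1}$ decomposes into disjoint translates of $\langle f_1\rangle$ (equivalently that $1+x+\cdots+x^{f_1-1}$ divides $P_{A_{j_1}}$ with quotient a $0/1$ polynomial in $x^{f_1}$); establishing that block structure is itself a non-trivial induction on the elements of $A_{j_1}$ in increasing order, using uniqueness of representation at each stage, and it occupies most of the proof of Theorem~6.7 in \cite{mnh1}. Until that is written out, ``$f_1\mid N$'' and ``$A_i\subseteq f_1\mathbb{N}_0$ for $i\neq j_1$'' are not yet available, so the induction on $\Omega(N)$ cannot close. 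Two smaller points also need attention: when a reduced component equals $\{0\}$ (the index $j_1$ is ``exhausted''), it must be dropped from the tuple so that the inductive hypothesis is applied to a cardinality vector in $(\mathbb{N}+1)^{m'}$ as the definition of joint ordered factorisation requires; and the degenerate cases $N=1$ and $n_j=1$ should be dispatched explicitly as the base of the induction.
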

\begin{corollary}
The number of $m$ part sum systems and centred sum systems for the positive integer $n\geq 2$, is given by ${\cal N}_m(n)$,
the counting function for the number of $m$ part joint ordered factorisations for $n$.
%\begin{proof}
%The proof follows immediately from the bijection between the set of all $m$ part joint ordered factorisations for
%the integer $n$ over the tuple $a=(n_1, \dots, n_m)$, and the set of all $m$ part sum systems for $n$
%established \cite[Theorem 6.7]{mnh1}.
%\end{proof}
\end{corollary}

\begin{example}\label{ex1}
Let $n=270$, and consider the corresponding 3-part sum system for $270$ over the 3-tuple $a=(n_1, n_2, n_3)=(9,5,6)$,
with joint ordered factorisation $\left ((1,3),(3,3),(1,3),(3,2),(2,5)\right )$.
Applying Proposition \ref{tssbuild} we obtain
\[
A_1=\{0, 1, 2, 9, 10, 11, 18, 19, 20\},\,\,\,
A_2=\{0, 54, 108, 162, 216\},\,\,\,
A_3=\{0, 3, 6, 27, 30, 33\},
\]
so that $|A_1|=9$, $|A_2|=5$, and $|A_3|=6$, and
$A_1+A_2+A_3=\{0,1,2,\ldots 269\}$, where $269=(9\times 5\times 6)-1$. The corresponding centred sum system components are therefore
\[
C_1=\{-10, -9, -8, -1, 0, 1, 8, 9, 10\},\quad
C_2=\{-108, -54, 0, 54, 108\},
\]
and $C_3=\frac{1}{2}\{-33,-27, -21, 21, 27, 33\}$.
Hence we have
\[
C_1+C_2+C_3
=\frac{1}{2}\{-269,-267,\ldots -1,1,\ldots 267,269\}.
\]
The first $n=270$ (odd) half integers centred about the origin.
\end{example}

\begin{remark}
In \cite{mnh1} sum-and-distance systems were classified into two types; \textit{inclusive} and \textit{non-inclusive}, where inclusive sum-and-distance systems generate central-symmetric sets around zero of consecutive integers, and non-inclusive systems generated a central-symmetric set around zero of consecutive odd integers. The terminology of \textit{non-inclusive} or \textit{inclusive}, corresponds respectively to whether the target set is generated solely by the sums and differences of elements between the different sets or whether the elements of the individual sets themselves are also required.

Here we work with the single concept of centred sum systems, removing the need to distinguish between inclusive and non-inclusive sum and distance systems. This approach leads to our main (invariance) result for centred sum systems with fixed
values of $n$ and $m$, but allowing distinct joint ordered factorisation constructions in Proposition \ref{tssbuild}, the numerical invariance properties stated below.
\end{remark}

\begin{theorem}\label{invariance}
Let $n\in \mathbb{N}$ with corresponding $m$-tuple $a=(n_1,n_2,\ldots, n_m)$, and let $A_1,\ldots A_m$ be a an $m$-part sum system for $n$, so that $\prod_{j=1}^m|A_j|=\prod_{j=1}^m n_j=n$, and $\sum_{j=1}^m A_j=\langle n \rangle$. Furthermore let $C_1,C_2,\ldots C_m$ be the centred sum system derived from the $A_j$ by subtracting
$\frac{1}{2}\max\,(A_j)$ from each component set $A_j$. Finally, for the sum systems and centred sum systems as given above, define the respective sums of elements $\sigma_A(n)$, and sums of squares of elements $\tau_{C}(n)$, such that
\begin{equation}
\sigma_{A}(n)=\sum_{j=1}^m\frac{n}{n_j}\sum_{a\in A_j}a\, ,\quad\text{and}\quad
\tau_{C}(n)=\sum_{j=1}^m\frac{n}{n_j}\sum_{c\in C_j}c^2.
\end{equation}
Then we have
\[
\sigma_A(n)=\frac{n(n-1)}{2}=\sum_{k=1}^{n-1}k=\binom{n}{2},\quad\text{and}\quad
\tau_{C}(n)=\frac{n(n^2-1)}{12}=\frac{1}{2}\binom{n+1}{3}.
\]
\end{theorem}
\begin{remark}
Any vector composed of the elements from all component sets of a centred sum system can therefore be thought of as an integer or half-integer lattice point on an ellipsoid in $n$-dimensional Euclidean space. Here the normalisation factors $n/n_j$ in Theorem \ref{invariance} transform the points on the ellipsoid into points on an $n$-dimensional sphere with radius $\sqrt{\tau_C(n)}$. This sphere only depends on $n$ and all different multifactorisations of $n$ correspond to integer or half-integer points on it.
\end{remark} 
\begin{example}\label{ex2}
For the 3-part sum system given in Example \ref{ex1}, we have that $n=270=9\times 5\times 6$, and we find that
\[
\sigma_A(270)=36\,315=\frac{270\times 269}{2},\quad\text{and}\quad
\tau_{C}(270)=\frac{3\,280\,455}{2}=\frac{271\times 270\times 269}{12},
\]
as shown in Theorem \ref{invariance}. Similarly, for $n=270$ and $a=(n_1,n_2,n_3)=(9,5,6)$, but using instead the
joint ordered factorisation $\left ((1,3),(2,5),(3,3),(1,3),(3,2)\right )$ to obtain the distinct centred sum system
\[
C_1=\{-46, -45, -44, -1, 0, 1, 44, 45, 46\},\quad C_2=\{-6, -3, 0, 3, 6\}
\]
and $C_3=\frac{1}{2}\{-165, -135, -105, 105, 135, 165\}$,
we again find that
$$\tau_{C}(270)=\frac{3\,280\,455}{2},$$
though the sums of squares over the individual component sets are different.
\end{example}
%To give the reader a structural overview of this present work
%\section{More on centred sum systems and proof of Theorem \ref{invariance}}

To prove Theorem \ref{invariance} we first require a lemma.
\begin{lemma}\label{addingsets}
Let $A$ and $B$ be two disjoint sets of numbers. Furthermore let $B$ be symmetric around the origin, i.e. $B=-B$. Then we have
\begin{equation*}
    \sum_{c\in(A+B)}c=|B|\sum_{a\in A}a,\quad\text{and}\quad
    \sum_{c\in(A+B)}c^2=|B|\sum_{a\in A}a^2+|A|\sum_{b\in B}b^2.
\end{equation*}
\end{lemma}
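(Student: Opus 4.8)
The plan is to reduce each set-sum over the Minkowski sum $A+B$ to a double sum over the pairs $(a,b)\in A\times B$, and then expand. The crucial structural input is that each element $c\in A+B$ is represented by a \emph{unique} pair $(a,b)$ with $c=a+b$, so that the map $(a,b)\mapsto a+b$ is a bijection from $A\times B$ onto $A+B$; this is precisely the basis property enjoyed by sum-system components. Granting this, we may rewrite $\sum_{c\in(A+B)}f(c)=\sum_{a\in A}\sum_{b\in B}f(a+b)$ for any function $f$, and the two claimed identities then follow by taking $f(c)=c$ and $f(c)=c^2$ respectively, together with the observation that the symmetry of $B$ about the origin forces $\sum_{b\in B}b=0$ (the elements cancel in pairs).

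For the first identity I would take $f(c)=c$ and expand:
\[
\sum_{a\in A}\sum_{b\in B}(a+b)=|B|\sum_{a\in A}a+|A|\sum_{b\in B}b=|B|\sum_{a\in A}a,
\]
the last step using $\sum_{b\in B}b=0$. This gives the stated formula directly.

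For the second identity I would take $f(c)=c^2$ and expand the square term by term:
\[
\sum_{a\in A}\sum_{b\in B}(a+b)^2
=|B|\sum_{a\in A}a^2+2\Big(\sum_{a\in A}a\Big)\Big(\sum_{b\in B}b\Big)+|A|\sum_{b\in B}b^2.
\]
Once again the symmetry of $B$ kills the middle (cross) term, since $\sum_{b\in B}b=0$, leaving exactly $|B|\sum_{a\in A}a^2+|A|\sum_{b\in B}b^2$.

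The only genuine obstacle is justifying the passage from the set-sum over $A+B$ to the full double sum over $A\times B$, i.e. verifying that distinct pairs $(a,b)$ yield distinct sums $a+b$; this is where the hypotheses on $A$ and $B$ must do their work. I would argue that the required uniqueness of representation (equivalently $|A+B|=|A|\,|B|$) holds in the sum-system context in which the lemma is applied, where at each stage one adds a centred, origin-symmetric component $B=C_j$ to a running partial sum $A$ that is itself a basis. Everything beyond this bijection is a one-line binomial expansion combined with the vanishing of $\sum_{b\in B}b$, and so carries no further analytic content.
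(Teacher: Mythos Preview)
Your argument is correct and follows essentially the same route as the paper: both reduce the set-sum to the double sum $\sum_{a\in A}\sum_{b\in B}f(a+b)$ and then expand, using $\sum_{b\in B}b=0$ to kill the linear and cross terms. If anything, you are more careful than the paper in flagging that the passage to the double sum requires $|A+B|=|A|\,|B|$ (unique representation), which the stated hypothesis ``disjoint'' does not by itself guarantee; the paper simply writes the double sum without comment.
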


\begin{proof}
Writing $A=\{a_1,a_2,\ldots, a_{|A|}\}$, we have $A+B=\{a_1+B,a_2+B,\dots,a_{|A|}+B\}$, and taking the sum over all elements in $A+B$, in conjunction with $\sum_{b\in B}b=0$, gives us
\begin{align*}
    \sum_{c\in(A+B)}c&=(a_1+B)+\dots+(a_{|A|}+B)=\sum_{i=1}^{|A|}\sum_{j=1}^{|B|}(a_i+b_j)\\
    &=|B|\sum_{i=1}^{|A|}a_i+|A|\sum_{j=1}^{|B|}b_j=|B|\sum_{a\in A}a,
\end{align*}
and for the sum of elements squared, writing $B=\{b_1,b_2,\ldots, b_{|B|}\}$, we have
\begin{equation*}\begin{split}
    \sum_{c\in(A+B)}c^2=&(a_1+B)^2+\dots+(a_{|A|}+B)^2=\sum_{i=1}^{|A|}\sum_{j=1}^{|B|}(a_i^2+a_ib_j+b_j^2)\\
    =&|B|\sum_{i=1}^{|A|}a_i^2+\left (\sum_{i=1}^{|A|}a_i\right)\left (\sum_{j=1}^{|B|}b_j\right)+|A|\sum_{j=1}^{|B|}b_j^2=|B|\sum_{a\in A}a^2+|A|\sum_{b\in B}b^2,
\end{split}
\end{equation*} as the sum of the cross-terms is zero.
\end{proof}

\begin{proof}[Proof of Theorem \ref{invariance}]

Regarding $\sigma_A(n)$, we know from Theorem 3.2 of \cite{mnh1} that each component set $A_j$ of a sum system is palindromic, centred about $(\text{max}\, A_j)/2$, so on average each element is of this magnitude. The sum of the elements of each component set is therefore given by
$$\sum_{a\in A_j}a =\frac{|A_j|(\text{max} A_j)}{2}=\frac{n_j(\text{max} A_j)}{2}.$$
Hence
\[
\frac{1}{n_j}\sum_{a\in A_j}a =\frac{(\text{max} A_j)}{2},
\]
and as the sum over the maximal element of each component set $A_j$ is $n-1$, multiplying through by $n$ and taking the sum over all component sets of the sum system gives us
\[
n\sum_{j=1}^m \frac{1}{n_j}\sum_{a\in A_j}a=\frac{n}{2}\sum_{j=1}^m (\text{max}\, A_j)=\frac{n(n-1)}{2}.
\]
For the sum of squares function $\tau_C(n)$, by repeated application of Lemma \ref{addingsets} we obtain
\[
\sum_{j=1}^m\sum_{c\in C_j}c^2=\sum_{j=1}^m\left (\mathop{\prod_{k=1}^m}_{k\neq j}|C_k|\right )\sum_{c\in C_j}c^2
=\sum_{j=1}^m \frac{n}{n_j}\sum_{c\in C_j}c^2.
\]
When at least one of the centred sum component sets has even cardinality, the sum of squares on the left-hand-side is simply twice the sum of squares of all the consecutive half-integers between $\frac{1}{2}$ and $\frac{n-1}{2}$, and so we have
\[
2 \sum _{j=1}^{\frac{n}{2}} \left(\frac{1}{2} (2 j-1)\right)^2
=\frac{1}{12} \left(n^3- n\right)=\frac{1}{12} (n-1)n (n+1).
\]
If all of the centred sum component sets have odd cardinality, then $n=n_1 n_2 \ldots n_m$ is odd and the sum of squares on the left-hand-side is simply twice the sum of squares of all the integers between $1$ and $(n-1)/2$, and we find that
\[
2 \sum _{j=1}^{\frac{n-1}{2}} j^2=\frac{1}{12} (n-1)n (n+1)=\frac{1}{2}\binom{n+1}{3}.
\]
\end{proof}

% In \cite[Section 3]{mnh1} a bijection was established between sum systems and sum-and-distance systems.
%Stepwise, the proof (via laurent polynomials) establishes the property that each sum system component set $A_j$ is %palindromic, whereby
%$$ A_j = (\max A_j) - A_j,$$ i.e. $x \in A_j$ if and only if $(\max A_j - x) \in A_j,$ and $A_j-(\max A_j)/2$ is %\emph{centred} about the zero. The two cases of $n$ being even or odd are considered separately to establish a %bijective construction, corresponding to (\ref{eqhybrid}) given in Definition \ref{maindefn}. The two bijections thus %establish that every (consecutive) sum-and-distance system can be uniquely associated with a joint ordered %factorisation and conversely.

\section{Conclusion and further areas for consideration}
We have established here the formula in Theorem \ref{enum22} for the function ${\cal N}_m(n)$, which counts all $m$-part joint ordered factorisations for an integer $n$. This function arises naturally when counting either
$m$ part joint ordered factorisations or $m$ part sum systems. The implicit sum over divisor relations for
${\cal N}_m(n)$, given in Theorem~\ref{sdf}, has an inherent symmetry which indicates that the number theoretic function
${\cal N}_m(n)$ might be worthy of further investigation in its own right.

For sum systems we have demonstrated that for a fixed target integer $n$, the sum system components exhibit an invariance in the sum of their elements, and for the centralised sum systems this translates into a sum of squares invariance.

In consideration of de Bruijn's question regarding complementing set systems (sum systems) for $\mathbb{Z}$ rather than $\mathbb{N}$, the results detailed in this paper, combined with those of \cite{mcl2,mnh1,mcl4} give methods for constructing centro-symmetric sets of integers or half integers about the origin, via joint ordered factorisations. For target sets which are not symmetric about the origin this is a much harder question and could form the focus of future investigations.

\subsection*{Disclosure statement} No potential conflict of interests was reported by the authors.

%The authors are grateful to the anonymous referee for constructive comments
%which helped streamline the paper, especially for suggesting the extensive use of
%convolution notation.
\subsection*{Acknowledgements} A. Law's research was supported by an EPSRC
DTP grant EP/R513003/1.

\subsection*{Appendix}

We conclude this section with some tables of numerical values for ${\cal N}_m(n)$ for $n=1,2,\ldots 32$.
We note that $\mathcal{N}_1(1)=0$, since the corresponding joint ordered factorisation is $\big((1,1)\big)$ and we require all $f$-values to be $\geq2$.

The first values of ${\cal N}_m(n)$ for $n=1,2,\ldots 32$, with $1\leq m\leq 4$, are given by

\[
\begin{array}{c|cccccccccccccccc}
n &1 & 2 & 3 & 4 & 5 & 6 & 7 & 8 & 9 & 10 & 11 & 12 & 13 & 14 & 15 & 16   \\ \hline
{\cal N}_1(n) & 0 & 1 & 1 & 1 & 1 & 1 & 1 & 1 & 1 & 1 & 1 & 1 & 1 & 1 & 1 & 1   \\
{\cal N}_2(n) &0 & 0 & 0 & 2 & 0 & 4 & 0 & 6 & 2 & 4 & 0 & 14 & 0 & 4 & 4 & 14   \\
{\cal N}_3(n) &0 & 0 & 0 & 0 & 0 & 0 & 0 & 6 & 0 & 0 & 0 & 18 & 0 & 0 & 0 & 36  \\
{\cal N}_4(n) &0 & 0 & 0 & 0 & 0 & 0 & 0 & 0 & 0 & 0 & 0 & 0 & 0 & 0 & 0 & 24   \\
\end{array}
\]
\[
\begin{array}{c|cccccccccccccccc}
n &   17 & 18 & 19 & 20 & 21 & 22 & 23 & 24 & 25 & 26 & 27 & 28 & 29 & 30 & 31 & 32 \\ \hline
{\cal N}_1(n) &  1 & 1 & 1 & 1 & 1 & 1 & 1 & 1 & 1 & 1 & 1 & 1 & 1 & 1 & 1 & 1 \\
{\cal N}_2(n) &  0 & 14 & 0 & 14 & 4 & 4 & 0 & 38 & 2 & 4 & 6 & 14 & 0 & 24 & 0 & 30 \\
{\cal N}_3(n) &  0 & 18 & 0 & 18 & 0 & 0 & 0 & 126 & 0 & 0 & 6 & 18 & 0 & 36 & 0 & 150 \\
{\cal N}_4(n) &  0 & 0 & 0 & 0 & 0 & 0 & 0 & 96 & 0 & 0 & 0 & 0 & 0 & 0 & 0 & 240 \\
\end{array}
\]
from which it can be seen that when $n=p$, a prime number, there exists only one sum system of dimension 1 corresponding to the component set $\{0,1,2,3,\ldots,p-1\}$, with joint ordered factorisation $(1,p)$. When $n=p^2$, with $m=2$, so $(n_1,n_2)=(p,p)$, we find that there exist two joint ordered factorisations $((1,p)(2,p))$ and $((2,p)(1,p))$  and no others.

In \cite{mcl2} the number of two-dimensional joint ordered factorisation $M_{(n,n)}$, corresponding to the tuple $(n_1,n_1)$, where in our notation $n=n_1^2$, was calculated to be
\begin{equation}\label{total2djofs}
\frac{1}{2}{\cal N}_2(n)=M_{(n_1,n_1)}=\sum_{j=1}^\infty \big(c_j(n_1)c_j(n_1)+c_j(n_1) c_{j+1}(n_1)\big)
=\sum_{j=1}^\infty c_j^{(1)}(n_1)c_j^{(0)}(n_1).
\end{equation}
Unlike the case $m=1$, when $n=n_1^2$ and $m=2$ there always exist at least two joint ordered factorisations.

\noindent
\small{School of Mathematics\\
Cardiff University\\
Cardiff CF24 4AG\\
UK\\
E-Mail: ambroselaw@hotmail.co.uk;\\ LettingtonMC@cardiff.ac.uk;\\SchmidtKM@cardiff.ac.uk}

\end{document}